\numberwithin{equation}{section}
\newtheorem{definition}{Definition}[section]
\newtheorem{theorem}[definition]{Theorem}
\newtheorem{lemma}[definition]{Lemma}
\newtheorem{corollary}[definition]{Corollary}
\newcommand{\cformulas}{\Phi_{\mathrm{CKL}}}
\newcommand{\icformulas}{\mathrm{I}\Phi_{\mathrm{CKL}}}
\newcommand{\psmh}{\mathbf{PS}_{\mathrm{MH}}}
\newcommand{\psomega}{\mathbf{PS}_{\omega}}
\newcommand{\ipsmh}{\mathbf{IPS}_{\mathrm{MH}}}
\newcommand{\ipsomega}{\mathbf{IPS}_{\omega}}
\newcommand{\psnon}{\mathbf{PS}}
\newcommand{\lckl}{\mathbf{CKL}}
\newcommand{\fckl}{\mathbf{Frm}_{\mathrm{CKL}}}
\newcommand{\ackl}{\mathbf{Alg}_{\mathrm{CKL}}}
\newcommand{\amh}{\mathbf{Alg}_{\mathrm{MH}}}
\newcommand{\freeomega}{\mathrm{F}_{\mathrm{CKL}}}
\newcommand{\freemh}{\mathrm{F}_{\mathrm{MH}}}
\newcommand{\linmh}{\mathrm{LT}_{\psmh}}
\newcommand{\linomega}{\mathrm{LT}_{\psomega}}
\newcommand{\linnon}{\mathrm{LT}_{\psnon}}
\newcommand{\modalop}{\mathcal{I}}
\newcommand{\mdl}{\Box}
\newcommand{\ko}{\mathsf{K}}
\newcommand{\eko}{\mathsf{E}}
\newcommand{\cko}{\mathsf{C}}
\newcommand{\ako}{\mathop{\mathrm{K}}\nolimits}
\newcommand{\aeko}{\mathop{\mathrm{E}}\nolimits}
\newcommand{\acko}{\mathop{\mathrm{C}}\nolimits}
\newcommand{\deflogic}{\mathsf{For}}
\newcommand{\defframe}{\mathsf{Frm}}
\newcommand{\defalgebra}{\mathsf{Alg}}
\newcommand{\ckl}{\mathbf{CKL}}
\newcommand{\logic}[1]{\mathbf{#1}}
\newcommand{\iand}{\bigwedge}
\newcommand{\ior}{\bigvee}
\newcommand{\thn}{\ \Rightarrow\ }
\newcommand{\eq}{\ \Leftrightarrow\ }
\newcommand{\vl}{\models}
\newcommand{\gm}{\Gamma}
\newcommand{\barcan}{\forall x\Box\phi\supset\Box\forall x\phi}
\newcommand{\power}{\mathcal{P}}
\newcommand{\propvar}{\mathsf{Prop}}
\newcommand{\pscom}{\mbox{$\rightarrow$}}
\begin{document}

\title{
Models for the common knowledge logic
}
\author{Yoshihito Tanaka}
\address{Kyushu Sangyo University, Fukuoka, Japan}
\email{ytanaka@ip.kyusan-u.ac.jp}

\date{}

\begin{abstract}
We discuss models of the common knowledge logic,  
a multi-modal logic with 
modal operators $\ko_{i}$ ($i\in\modalop$) and $\cko$.  
The intended meaning of
$\cko\phi$ is 
$\phi\land\eko\phi\land\eko^{2}\phi\cdots$, 
where $\eko\phi=\iand_{i\in\modalop}\ko_{i}\phi$. 
A Kripke frame for this, called a CKL-frame, is 
$\langle W,R_{\ko_{i}} (i\in\modalop), R_{\cko}\rangle$, 
where $R_{\cko}$ is the reflexive and transitive closure of 
$R_{\eko}=\bigcup_{i\in\modalop}R_{\ko_{i}}$, and an algebra for this,  
called a CKL-algebra, is 
a modal algebra with operators 
$\ako_{i}$ ($i\in\modalop$) and $\acko$,
satisfying 
$\acko x\leq\aeko\acko x$ 
and 
$\acko x=\bigsqcap_{n\in\omega}\aeko^{n}x$, 
where $\aeko x=\bigsqcap_{i\in\modalop} \ako_{i} x$. 
We show that the class of CKL-frames 
is modally definable, whereas 
the class of CKL-algebras 
is not. 
That is, 
the class of CKL-algebras is not a variety, and
there exists a modal algebra in which the common knowledge logic 
is valid, but $\acko x\not=\bigsqcap_{n\in\omega}\aeko^{n}x$. 
\end{abstract}

\maketitle

\bigskip
\noindent
{\bf Keywords}\\
Modal Logic, Common Knowledge Logic, Infinitary Logic, 
Kripke Model, Modal Algebra, Definability

\bigskip
\noindent
{\bf Statements and Declarations}\\
Competing Interests: The author declares that he has no competing interests.

\bigskip
\noindent
{\bf MCS}\\
03B42, 03B45

\newpage

\section{Introduction}

In this paper, we discuss models of the common knowledge logic. 
The common knowledge logic is a multi-modal logic that includes the
modal operators $\ko_{i}$ ($i\in\modalop$, where 
$\modalop$ is a finite set of agents) and $\cko$ in the language.  
The intended meanings of $\ko_{i}\phi$ ($i\in\modalop$) 
and $\cko\phi$ are ``the agent $i$ knows $\phi$'' ($i\in\modalop$) and 
``$\phi$ is common knowledge 
among $\modalop$'', respectively. 
Semantically, this can be expressed as follows: 
$\cko\phi$ is true if and only if all of 
$\phi$, $\eko\phi$, $\eko^{2}\phi$, $\eko^{3}\phi,\ldots$ 
are true, 
where $\eko\phi=\iand_{i\in\modalop}\ko_{i}\phi$ 
(see, e.g., \cite{sgb94,mey-vdh95,wlt00,knk-ngs-szk-tnk}). 
\footnote{
Some studies define that 
$\cko\phi$ is true if and only if all of 
$\eko\phi$, $\eko^{2}\phi$, $\eko^{3}\phi,\ldots$ 
are true (see, e.g., \cite{hlp-mss92,bch-kzn-std10}). 
However, our research can be easily modified to apply in such cases, as well.
}
A Kripke frame that satisfies the condition is 
$\langle W,R_{\ko_{i}} (i\in\modalop), R_{\cko}\rangle$, 
where $R_{\cko}$ is the reflexive and transitive closure of 
$R_{\eko}=\bigcup_{i\in\modalop} R_{\ko_{i}}$. 
We call such Kripke frames as CKL-frames. 
An algebra that satisfies the condition is 
a modal algebra with unary operators 
$\ako_{i}$ ($i\in\modalop$) and $\acko$,
which satisfies that 
$\acko x\leq\aeko\acko x$ 
and 
$\acko x=\bigsqcap_{n\in\omega}\aeko^{n} x$, 
where $\aeko x=\bigsqcap_{i\in\modalop} \ako_{i} x$. 
We call such modal algebras as CKL-algebras. 
In this paper, we show that the class of CKL-frames 
is modally definable. 
Then, we show that the class of CKL-algebras 
does not form a variety, 
indicating that the class of CKL-algebras is not modally definable. 
Consequently, it follows that 
there exists 
a modal algebra in which the common knowledge logic 
is valid, but $\acko x\not=\bigsqcap_{n\in\omega}\aeko^{n}x$.

We use the proof systems $\psmh$, originally introduced by 
Meyer and van der Hoek \cite{mey-vdh95}, and 
$\psomega$, originally introduced by 
Kaneko-Nagashima-Suzuki-Tanaka \cite{knk-ngs-szk-tnk}, 
to discuss proof theoretic properties of the common knowledge logic. 
The system $\psmh$ includes the following three axiom schemas 
for the common knowledge 
operator:   
\begin{equation}\label{indpsmh}
\cko\phi\supset\phi,\ 
\cko\phi\supset\eko\cko\phi,\ 
\cko(\phi\supset\eko\phi)\supset(\phi\supset\cko\phi). 
\end{equation}
The system $\psomega$ includes the first two axiom schemas of (\ref{indpsmh})
and the following $\omega$-rule: 
\begin{equation*}
\dfrac{
\gamma
\supset
\mdl_{1}(
\phi_{1}\supset
\mdl_{2}(
\phi_{2}\supset
\cdots\supset
\mdl_{k}(
\phi_{k}\supset
\eko^{n}\phi
)
\cdots
)
)
\text{\rm \hspace{10pt}($n\in\omega$) }
}
{
\gamma
\supset
\mdl_{1}(
\phi_{1}\supset
\mdl_{2}(
\phi_{2}\supset
\cdots\supset
\mdl_{k}(
\phi_{k}\supset
\cko\phi
)
\cdots
)
)
}.
\end{equation*}
It is shown in \cite{mey-vdh95} and 
\cite{knk-ngs-szk-tnk} that $\psmh$ and $\psomega$ are sound and complete 
with respect to the class of CKL-frames, respectively. 
Then, it follows immediately that both systems are sound and complete 
with respect to the class of CKL-algebras. 
We write $\ckl$ for the set of formulas that are derivable in $\psmh$ and $\psomega$. 
For the knowledge operators $\ko_{i}$ ($i\in\modalop$), we only assume
the axioms for the normal modal logic $\logic{K}$. 
However, the results of the paper hold for 
models for 
$\logic{K}$, $\logic{D}$, $\logic{T}$, $\logic{4}$ $\logic{5}$ and their combinations.

Then, we prove that 
the class of CKL-frames 
is modally definable, that is, there exists a set $S$ of formulas of the common 
knowledge logic such that 
the class of CKL-frames is equal to the class of Kripke frames in which $S$ is valid. 
In fact, we show that the class of CKL-frames is equal to the class of Kripke frames 
in which $\lckl$ is valid. 
On the other hand, we show that the class of CKL-algebras is not modally definable. 
We first introduce the notion of MH-algebras, 
an algebraic counterpart of the system $\psmh$. 
Then, it is shown that 
the class of MH-algebras
is equal to 
the class of modal algebras in which $\lckl$ is valid. 
Subsequently, it is proved that 
the class of CKL-algebras is not modally definable.  
This implies the existence of a modal algebra validating the 
common knowledge logic, but does not satisfy 
$\acko x=\bigsqcap_{n\in\omega}\aeko^{n}x$. 
Additionally, we show that the free MH-algebras are CKL-algebras.

In the final section of the paper, we discuss infinitary common knowledge logic. 
The infinitary common knowledge logic is 
introduced by Kaneko-Nagashima in \cite{knk-ngs96,knk-ngs97} 
to provide a mathematical logic framework for investigations of game 
theoretical problems.
Then, it is shown by 
Kaneko-Nagashima-Suzuki-Tanaka \cite{knk-ngs-szk-tnk} 
that the infinitary extension of $\psomega$ 
is sound and complete with respect to the class of CKL-frames. 
We show that the infinitary extension of $\psmh$ is also sound and complete 
with respect to the class of CKL-frames. 
Infinitary extensions of logics are often compared with predicate extensions 
of logics. 
It is proved in \cite{knk-ngs-szk-tnk} 
that the predicate extension of $\psomega$ is sound and complete 
with respect to the class of CKL-frames. 
However, 
it is shown by Wolter \cite{wlt00} that 
the predicate extension of $\psmh$ is not complete with respect to the 
class of CKL-frames. 
Our result is one of the examples demonstrating the 
distinct properties exhibited by infinitary extensions and 
predicate extensions.

The paper is organized as follows. In Section \ref{sec:pre}, 
we recall basic definitions and fix notation. 
In Section \ref{sec:ps}, 
we give definitions of the proof systems $\psmh$ and 
$\psomega$, 
introduced by Meyer and van der Hoek \cite{mey-vdh95} and 
Kaneko-Nagashima-Suzuki-Tanaka \cite{knk-ngs-szk-tnk}, respectively. 
In Section \ref{sec:models}, 
we show that the class of CKL-frames is
modally definable, while 
the class CKL-algebras is not. 
In Section \ref{sec:inflogic}, 
we discuss the infinitary common knowledge logic.

\section{Preliminaries}\label{sec:pre}

In this section, we recall basic definitions and fix notation. 
Throughout the paper, we write $\modalop$ for a non-empty finite set of agents.

The language for the common knowledge logic consists of the 
following symbols:
\begin{enumerate} 
\item
a countable set $\propvar$ of propositional variables;

\item
$\bot$ and $\top$;

\item logical connectives: 
$\lor$, $\land$, and $\neg$;

\item
modal operators $\ko_{i}$ ($i\in\modalop$) and $\cko$.
\end{enumerate}
The set $\cformulas$ of formulas is defined in the usual way. 
We write $\phi\supset\psi$, 
$\phi\equiv\psi$,  and $\eko\phi$ to abbreviate
$\neg\phi\lor\psi$, 
$(\phi\supset\psi)\land(\psi\supset\phi)$, and $\iand_{i\in\modalop}\ko_{i}\phi$, 
respectively. 
For each $n\in\omega$, we define $\eko^{n}\phi$ by 
$\eko^{0}\phi=\phi$ and $\eko^{n+1}\phi=\eko(\eko^{n})\phi$.

Throughout this paper, we consider 
multi-modal algebras with modal operators 
$\ako_{i}\ (i\in\modalop)$ and $\acko$
as algebraic models for the common knowledge logic, 
and we call such multi-modal algebras simply as modal algebras.

\begin{definition}
An algebra 
$\langle A,\sqcup,\sqcap,-,\ako_{i}\ (i\in\modalop),\acko,0,1,\rangle$
is called a {\em modal algebra} if 
$\langle A,\sqcup,\sqcap,-,0,1,\rangle$ is a Boolean algebra and 
$\mdl 1=1$ and $\mdl(x\sqcap y)=\mdl x\sqcap\mdl y$ hold 
for each modal operator $\mdl$
and each $x$ and $y$ in $A$. 
For each $x$ in $A$, we write 
$\aeko x$ to abbreviate $\bigsqcap_{i\in\modalop} \ako_{i} x$, and  
for each $x$ and $y$ in $A$,
we write $x\pscom y$ for $-x\sqcup y$. 
A modal algebra $A$ is said to be {\em complete}, 
if for any subset $S\subseteq A$, the least upper bound 
$\bigsqcup S$ and 
the greatest 
lower bound $\bigsqcap S$ exist in $A$, 
and said to be {\em completely multiplicative}, 
if for any subset $S\subseteq A$, 
\begin{equation}\label{abarcan}
\mdl\bigsqcap S=\bigsqcap_{s\in S}\mdl s
\end{equation}
holds for any modal operator $\mdl$. 
A modal algebra $A$ is said to be {\em $\omega_{1}$-complete}, if
$\bigsqcup S$ and 
$\bigsqcap S$ exist in $A$ for any countable subset $S\subseteq A$ and 
said to be {\em $\omega_{1}$-multiplicative}, if 
(\ref{abarcan}) holds for any countable subset $S\subseteq A$. 
A modal algebra is called a {\em CKL-algebra}, if 
the following hold for each $x\in A$:
\begin{enumerate}
\item
$\acko x\leq\aeko\acko x$;
\item
$\acko x=\bigsqcap_{n\in\omega} \aeko^{n} x$. 
\end{enumerate}
We write $\ackl$ for the class of all CKL-algebras. 
\end{definition}

We often identify formulas of modal logic with terms of modal algebras, 
when no confusion arises. 
Let $X$ be a set of variables. 
Formally, the set of terms of modal algebras 
over $X$ is the smallest set $T(X)$ which satisfies the following conditions:
\begin{enumerate}
\item
$0$, $1$, and any $x\in X$ are in $T(X)$;
\item
if $a$ is in $T(X)$, then $(-a)\in T(X)$;
\item
if $a$ and $b$ are in $T(X)$, then $(a\sqcup b)$ and $(a\sqcap b)$ are in $T(X)$;
\item
if $a$ is in $T(X)$, then $(\mdl a)\in T(X)$ for each 
modal operator $\mdl=\ako_{i}\  (i\in\modalop),\  \acko$. 
\end{enumerate}

\begin{definition}
An {algebraic model} is a pair $\langle A,v\rangle$, where  
$A$ 
is a modal algebra and  
$v$ is a mapping, 
which is called a {\em valuation} on $A$, 
from the set $\propvar$ of propositional variables to $A$.
For each valuation $v$ on $A$, 
the domain $\propvar$ is extended to $\cformulas$ in the following way: 
\begin{enumerate}
\item
$v(\bot)=0$, 
$v(\top)=1$; 
\item 
$v(\phi\lor\psi)=v(\phi)\sqcup v(\psi)$, $v(\phi\land\psi)=v(\phi)\sqcap v(\psi)$;
\item 
$v(\neg\phi)=
-v(\phi)$;
\item 
$v(\ko_{i}\phi)=\ako_{i} v(\phi)$ ($i\in\modalop$), $v(\cko\phi)=\acko v(\phi)$. 
\end{enumerate}
\end{definition}

In this paper, we assume a Kripke frame is equipped with 
relations 
$R_{\ko_{i}}$ ($i\in\modalop$) and $R_{\cko}$, 
unless otherwise noted.

\begin{definition}
A Kripke frame is a structure 
$F=\langle W,R_{\ko_{i}} (i\in\modalop), R_{\cko}\rangle$, 
where $W$ is a non-empty set and 
$R_{\ko_{i}}$ ($i\in\modalop$) and $R_{\cko}$
are binary relations 
on $W$. 
For any relation $R$ in $F$ and any $x\in W$, 
we write $R(x)$ for the set $\{y\in W\mid (x,y)\in R\}$. 
A Kripke frame is called 
a {\em CKL-frame}, if 
$R_{\cko}=\bigcup_{n\in\omega}\left(R_{\eko}\right)^{n}$,  
where
$
R_{\eko}=\bigcup_{i\in\modalop}R_{\ko_{i}}.
$
That is, $R_{\cko}$ is the reflexive and transitive closure of 
$R_{\eko}$.  
We write $\fckl$ for the class of all CKL-frames. 
\end{definition}

\begin{definition}
A {\em Kripke model} is a pair $\langle F,v\rangle$, where  
$F=\langle W,\{R_{\ko_{i}}\}_{i\in\modalop},R_{\cko}\rangle$ 
is a Kripke frame and  
$v$ is a mapping, 
which is called a {\em valuation} on $F$, 
from the set $\propvar$ of propositional variables to $\power(W)$.
For each valuation $v$ on $F$, 
the domain $\propvar$ is extended to $\cformulas$ in the following way: 
\begin{enumerate}
\item
$v(\bot)=\emptyset$, 
$v(\top)=W$; 
\item 
$v(\phi\lor\psi)=v(\phi)\cup v(\psi)$, $v(\phi\land\psi)=v(\phi)\cap v(\psi)$;
\item 
$v(\neg\phi)=
W\setminus v(\phi)$;
\item 
$v(\Box\phi)=\{w\in W\mid R_{\Box}(w)\subseteq v(\phi))\}$,  
for each modal operator $\Box$. 
\end{enumerate}
\end{definition}

\begin{definition}
Let $A$ be a modal algebra.  
A formula $\phi$ is said to be {\em valid} in $A$
($A\vl\phi$, in symbol), 
if $v(\phi)=1$ for any valuation 
$v$ on $A$. 
Let $C$ be a class of modal algebras. 
A formula $\phi$ is said to be {\em valid} in $C$
if $A\vl\phi$ for every $A\in C$. 
A set $\gm$ of formulas is said to  be {\em valid} in $C$
($C\vl\gm$, in symbol), 
if $C\vl\gamma$ for every $\gamma\in\gm$.  
The corresponding relations between Kripke frames and formulas are defined 
in the same way. 
\end{definition}

\begin{definition}
Let $\gm$ be a set of formulas. 
We write 
$\defalgebra(\gm)$ (resp. $\defframe(\gm)$) for the class of 
modal algebras 
(resp. Kripke frames) in which $\gm$ is valid. 
Let $C$ be a class of modal algebras or a class of Kripke frames. We write 
$\deflogic(C)$ for the set of formulas which are valid in $C$. 
\end{definition}

\begin{definition}
Let $C$ be a class of modal algebras (resp. Kripke frames). 
We say that $C$ is {\em modally definable}, if there exists a set 
$S$ of formulas of the common knowledge logic such that 
$C=\defalgebra(S)$ (resp. $C=\defframe(S)$). 
\end{definition}

Let $C$ be a class of modal algebras (resp. Kripke frames). 
It is well known that the following holds: 
\begin{equation}\label{definability}
\text{
$C$ is modally definable 
$\eq$ 
$C=\defalgebra(\deflogic(C))$ 
(resp. $C=\defframe(\deflogic(C))$)
}.
\end{equation}

Let $F=\langle W,R_{\ko_{i}} (i\in\modalop), R_{\cko}\rangle$ be 
a Kripke frame. 
It is well known that 
$$
F^{+}
=
\langle
\power(W),\cup,\cap,W\setminus, 
\mdl_{R_{\ko_{i}}} (i\in\modalop),\mdl_{R_{\cko}},\emptyset,W
\rangle
$$ 
is a complete and completely multiplicative modal algebra, where 
$$
\mdl_{R} S=\{w\in W\mid R(w)\subseteq S\}
$$
for each binary relation $R$ in $F$ and $S\subseteq W$. 
It is also well known that 
\begin{equation}\label{dualalgebra}
F\vl\phi \text{ $\eq$ } F^{+}\vl\phi
\end{equation}
holds, for any formula $\phi$
(\cite{jns-trs51,jns-trs52,thm75}, see also \cite{blc-rjk-vnm01}).

\begin{theorem}\label{cklduality}
Let $F$ be a Kripke frame. 
Then, 
$F$ is a CKL-frame, if and only if $F^{+}$ is a 
CKL-algebra. 
\end{theorem}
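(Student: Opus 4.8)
The plan is to collapse both conditions of the biconditional to the single relational identity $R_{\cko}=R_{\eko}^{*}$, where $R_{\eko}^{*}$ denotes the reflexive and transitive closure of $R_{\eko}$. First I would compute the operators of $F^{+}$ in terms of the relations. By definition $\aeko x=\bigsqcap_{i\in\modalop}\ako_{i}x=\bigcap_{i\in\modalop}\mdl_{R_{\ko_{i}}}x$, and since a point $w$ belongs to this intersection precisely when every $R_{\eko}$-successor of $w$ lies in $x$, we get $\aeko x=\mdl_{R_{\eko}}x$. Writing $(R_{\eko})^{n}$ for the $n$-fold relational composition of $R_{\eko}$, with $(R_{\eko})^{0}=\mathrm{Id}$, a routine induction based on $\mdl_{R_{\eko}}\mdl_{S}x=\mdl_{R_{\eko}\circ S}x$ yields $\aeko^{n}x=\mdl_{(R_{\eko})^{n}}x$ for all $n$. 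Finally, since $R_{\eko}^{*}=\bigcup_{n\in\omega}(R_{\eko})^{n}$, since the box of a union of relations is the intersection of the corresponding boxes, and since $\power(W)$ is complete so that greatest lower bounds are just intersections, I obtain the key identity $\bigsqcap_{n\in\omega}\aeko^{n}x=\mdl_{R_{\eko}^{*}}x$ for every $x\in\power(W)$.

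Second I would use the standard fact that the complex algebra recovers its relations: for binary relations $R,S$ on $W$ one has $\mdl_{R}x\subseteq\mdl_{S}x$ for every $x\in\power(W)$ if and only if $S\subseteq R$, and hence $\mdl_{R}x=\mdl_{S}x$ for all $x$ if and only if $R=S$. The only nonroutine inclusion is proved pointwise: given a pair $(w,v)\in S\setminus R$, testing against $x=W\setminus\{v\}$ shows $w\in\mdl_{R}x$ but $w\notin\mdl_{S}x$, so $\mdl_{R}x\subseteq\mdl_{S}x$ fails.

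With these ingredients the theorem follows quickly. For the forward direction, suppose $F$ is a CKL-frame, so $R_{\cko}=R_{\eko}^{*}$. Then condition~(2) is immediate from the key identity, as $\acko x=\mdl_{R_{\cko}}x=\mdl_{R_{\eko}^{*}}x=\bigsqcap_{n\in\omega}\aeko^{n}x$; and condition~(1) is checked directly, for if $w\in\acko x$ and $(w,v)\in R_{\eko}\subseteq R_{\cko}$, then any $R_{\cko}$-successor $u$ of $v$ satisfies $(w,u)\in R_{\cko}$ by transitivity of the closure, so $u\in x$, giving $v\in\acko x$ and thus $w\in\aeko\acko x$. For the converse, suppose $F^{+}$ is a CKL-algebra. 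Condition~(2) then reads $\mdl_{R_{\cko}}x=\acko x=\bigsqcap_{n\in\omega}\aeko^{n}x=\mdl_{R_{\eko}^{*}}x$ for all $x\in\power(W)$, whence the recovery fact forces $R_{\cko}=R_{\eko}^{*}$, i.e.\ $F$ is a CKL-frame. I expect no real obstacle; the single point deserving care is the infinitary identity $\bigsqcap_{n\in\omega}\aeko^{n}x=\mdl_{R_{\eko}^{*}}x$, which rests on the box distributing over the union that defines the closure and on the completeness of $\power(W)$. It is also worth remarking that, as the argument shows, condition~(1) is not needed for the converse and is in fact automatic once~(2) determines $R_{\cko}$.
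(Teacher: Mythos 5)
Your proof is correct. The paper offers no argument for this theorem at all (it is stated with ``it is easy to see''), and your proof supplies exactly the standard argument the author evidently had in mind: compute the operators of $F^{+}$ relationally (so that $\aeko^{n}x=\mdl_{(R_{\eko})^{n}}x$ and $\bigsqcap_{n\in\omega}\aeko^{n}x=\mdl_{R_{\eko}^{*}}x$, using completeness of $\power(W)$), then invoke the fact that a complex algebra determines its relations to convert the algebraic identity back into $R_{\cko}=R_{\eko}^{*}$. Your closing observation that condition~(1) of the CKL-algebra definition is automatic in $F^{+}$ once condition~(2) holds is also correct, and reflects the general fact that in a completely multiplicative algebra $\aeko\bigsqcap_{n}\aeko^{n}x=\bigsqcap_{n}\aeko^{n+1}x\geq\bigsqcap_{n}\aeko^{n}x$.
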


\begin{proof}
Let $F=\langle W,R_{\ko_{i}} (i\in\modalop), R_{\cko}\rangle$ be 
a Kripke frame. 
First, suppose that $F$ is a CKL-frame. 
Then, $R_{\cko}=\bigcup_{n\in\omega}\left(R_{\eko}\right)^{n}$. 
Take any $X\subseteq W$. 
Then, for any $w\in W$, 
\begin{align*}
w\in \Box_{R_{\cko}}X
&\eq
R_{\cko}(w)\subseteq X\\
&\eq
\left(\bigcup_{n\in\omega}\left(R_{\eko}\right)^{n}\right)(w)\subseteq X\\
&\eq
\forall n\in\omega\left(\left(R_{\eko}\right)^{n}(w)\subseteq X\right)\\
&\eq
w\in \bigcap_{n\in\omega}\left(\Box_{R_{\eko}}\right)^{n}X. 
\end{align*}
Next, suppose that 
$F^{+}$ is a CKL-algebra. Take any $x\in W$. 
We show that 
$R_{\cko}(x)=\left(\bigcup_{n\in\omega}\left(R_{\eko}\right)^{n}\right)(x)$. 
For any $w\in W$, 
\begin{align*}
w\not\in R_{\cko}(x)
&\eq
x\in \Box_{R_{\cko}}\left(W\setminus\{w\}\right)\\
&\eq
x\in \bigcap_{n\in\omega}\left(\Box_{R_{\eko}}\right)^{n}\left(W\setminus\{w\}\right)\\
&\eq
\forall n\in\omega\left(w\not\in \left(R_{\eko}\right)^{n}(x)\right)\\
&\eq
w\not\in\left(\bigcup_{n\in\omega}\left(R_{\eko}\right)^{n}\right)(x). 
\end{align*}
\end{proof}

\section{Proof systems for common knowledge logic}\label{sec:ps}

In this section, we define two equivalent proof systems $\psmh$ and 
$\psomega$ for 
the common knowledge logic, initially introduced 
by Meyer and van der Hoek \cite{mey-vdh95} and 
Kaneko-Nagashima-Suzuki-Tanaka \cite{knk-ngs-szk-tnk}, respectively.

We first define the system $\psmh$, which is originally given 
by Meyer and van der Hoek (the system $\mathbf{KEC}_{\mathbf{(m)}}$ of \cite{mey-vdh95}, 
where $m$ is the number of the knowledge operators $\ko_{1},\ldots,\ko_{m}$).

\begin{definition}\label{defpsax}
The proof system $\psmh$ for the common knowledge logic 
consists of the following axiom schemas (1)-(5) and inference rules (6)-(8): 
\begin{enumerate}
\item
all tautologies;

\item
$\mdl(\phi\supset\psi)\supset(\mdl\phi\supset\mdl\psi)$, for 
each modal operator $\mdl$;

\item
$\cko\phi\supset\phi$;  

\item\label{ckobarcan}
$\cko\phi\supset\eko\cko\phi$;

\item\label{ckoind}
$
\cko(\phi\supset\eko\phi)\supset(\phi\supset\cko\phi);
$
\item
modus ponens;

\item
uniform substitution rule;

\item
necessitation rule for each modal operator. 
\end{enumerate}
\end{definition}

Next, we define the system $\psomega$, which includes an $\omega$-rule.  
It is originally introduced by Kaneko-Nagashima-Suzuki-Tanaka 
(the system $\mathrm{CY}$ in \cite{knk-ngs-szk-tnk}), 
and is a Hilbert-style translation of the sequent system 
$\mathrm{CK}$ given in \cite{tnk01}. 

\begin{definition}\label{defpscko}
The proof system $\psomega$ 
consists of 
axiom schemas (1)-(4) and inference rules (6)-(8) in Definition \ref{defpsax},  
and the following $\omega$-rule: 
for any $k\in\omega$ and 
any modal operators $\mdl_{1},\ldots,\mdl_{k}$, 
\begin{equation}\label{cright}
\dfrac{
\gamma
\supset
\mdl_{1}(
\phi_{1}\supset
\mdl_{2}(
\phi_{2}\supset
\cdots\supset
\mdl_{k}(
\phi_{k}\supset
\eko^{n}\phi
)
\cdots
)
)
\text{\rm \hspace{10pt}($n\in\omega$) }
}
{
\gamma
\supset
\mdl_{1}(
\phi_{1}\supset
\mdl_{2}(
\phi_{2}\supset
\cdots\supset
\mdl_{k}(
\phi_{k}\supset
\cko\phi
)
\cdots
)
)
}.
\end{equation}
\end{definition}

The set of premises of the inference rule (\ref{cright}) is countable. 
When $k=0$, 
(\ref{cright}) means that 
\begin{equation}\label{cright1}
\dfrac
{\gamma\supset\eko^{n}\phi\text{\rm \hspace{10pt}($n\in\omega$) }}
{\gamma\supset\cko\phi}. 
\end{equation}

Both $\psmh$ and $\psomega$ are 
sound and the complete
with respect to $\fckl$, where the former is proved in \cite{mey-vdh95},  
and latter in \cite{tnk01, knk-ngs-szk-tnk}. 
Hence, $\psmh$ and $\psomega$ are equivalent. 
We write $\ckl$ for the set of formulas that are derivable in these 
proof systems. 
It is obvious from the Kripke completeness that the proof systems $\psmh$ and $\psomega$
are sound and complete with respect to the class $\ackl$ of 
CKL-algebras. 
Hence, we have 
\begin{equation}\label{soundnesscompleteness}
\lckl=\deflogic(\ackl).
\end{equation}

\section{Models for $\lckl$}\label{sec:models}

In this section, we show 
that the class $\fckl$ of CKL-frames is
modally definable, while 
the class $\ackl$ of CKL-algebras is not.   
First, we prove that 
$\fckl=\defframe(\lckl)$. 
Next, we introduce a class $\amh$ of MH-algebras, and 
show that 
$\amh=\defalgebra(\ckl)$.  
Then, we prove that 
$\ackl$ does not form a variety. 
Consequently, it follows that 
$\ackl$ is not modally definable. 
Finally, we show that any free MH-algebra is a CKL-algebra.

\begin{definition}\label{defmhalg}
A modal algebra is called a {\em MH-algebra}, if 
the following hold for each $x\in A$: 
\begin{enumerate}
\item\label{mh1}
$\acko x\leq x$;
\item\label{mh2}
$\acko x\leq \aeko\acko x$;
\item\label{mhalgax}
$\acko(x\pscom \aeko x)\leq x\pscom \acko x$.
\end{enumerate}
We write $\amh$ for the class of all MH-algebras. 
\end{definition}

It is clear that the system $\psmh$ is sound and complete with respect 
to $\amh$. Hence, 
\begin{equation}\label{amhcompleteness}
\ckl=\deflogic(\amh).
\end{equation}

\begin{theorem}\label{algcklmh}
The class of MH-algebras is  
the class of modal algebras in which $\ckl$ is valid. That is,  
\begin{equation}\label{amhalgckl}
\amh=\defalgebra(\ckl). 
\end{equation}
\end{theorem}

\begin{proof}
It is trivial from the definition that 
$\amh$ is equationally definable.  
Since we can identify a formula $\phi\equiv\psi$ of the common knowledge logic 
with the equation $\phi=\psi$ of modal algebras, 
$\amh$ is modally definable. 
Hence, by (\ref{definability}) and (\ref{amhcompleteness}), 
\begin{equation*}
\amh=\defalgebra(\deflogic(\amh))=\defalgebra(\ckl). 
\end{equation*}
\end{proof}

\begin{theorem}\label{cklmh}
If $A$ is a CKL-algebra, then it is an MH-algebra.  
\end{theorem}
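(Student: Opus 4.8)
The plan is to verify the three defining inequalities of Definition~\ref{defmhalg} for an arbitrary CKL-algebra $A$ and each $x\in A$. Condition~(\ref{mh2}) is literally condition~(1) in the definition of a CKL-algebra, so there is nothing to prove. For condition~(\ref{mh1}), recall that $\acko x$ is the greatest lower bound of $\{\aeko^{n}x\mid n\in\omega\}$; being a lower bound, it lies below the member for $n=0$, namely $\aeko^{0}x=x$, whence $\acko x\leq x$.

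The substantive part is condition~(\ref{mhalgax}). I would write $y=x\pscom\aeko x=-x\sqcup\aeko x$ and $c=\acko y$. Since $x\pscom\acko x=-x\sqcup\acko x$, the inequality to be shown is equivalent to $c\sqcap x\leq\acko x$, and because $\acko x$ is the greatest lower bound of $\{\aeko^{n}x\mid n\in\omega\}$, it suffices to prove $c\sqcap x\leq\aeko^{n}x$ for every $n\in\omega$. I will establish this by induction on $n$, using two preliminary facts. First, $\aeko$ is itself a normal modal operator: since each $\ako_{i}$ preserves the top element and finite meets and $\aeko x=\bigsqcap_{i\in\modalop}\ako_{i}x$, the operator $\aeko$ is monotone and satisfies $\aeko(a\sqcap b)=\aeko a\sqcap\aeko b$. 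Second, condition~(1) of the CKL-algebra applied to $y$ gives $c=\acko y\leq\aeko\acko y=\aeko c$, while condition~(\ref{mh1}) applied to $y$ gives $\acko y\leq y$, so that $c\sqcap x\leq(-x\sqcup\aeko x)\sqcap x\leq\aeko x$.

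For the induction, the base case $n=0$ is immediate since $c\sqcap x\leq x=\aeko^{0}x$. For the step, assume $c\sqcap x\leq\aeko^{n}x$. Applying the monotone, multiplicative operator $\aeko$ yields $\aeko c\sqcap\aeko x=\aeko(c\sqcap x)\leq\aeko^{n+1}x$. Combining this with $c\leq\aeko c$ gives $c\sqcap\aeko x\leq\aeko c\sqcap\aeko x\leq\aeko^{n+1}x$, and since $c\sqcap x\leq\aeko x$ forces $c\sqcap x\leq c\sqcap\aeko x$, we conclude $c\sqcap x\leq\aeko^{n+1}x$, completing the induction. Hence $c\sqcap x\leq\acko x$, which is exactly condition~(\ref{mhalgax}).

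The only real obstacle is the inductive step for condition~(\ref{mhalgax}): the key is to recognise that $\aeko$ inherits normality (monotonicity and multiplicativity) from the operators $\ako_{i}$, so that the induction hypothesis can be pushed under $\aeko$ and then combined with the two one-step inequalities $c\leq\aeko c$ and $c\sqcap x\leq\aeko x$ to descend to $\aeko^{n+1}x$. The other two conditions are immediate consequences of $\acko x$ being the greatest lower bound of $\{\aeko^{n}x\mid n\in\omega\}$.
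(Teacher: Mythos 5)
Your proof is correct, and its skeleton matches the paper's: both arguments reduce condition (\ref{mhalgax}) to showing $x\sqcap\acko(x\pscom\aeko x)\leq\aeko^{n}x$ for every $n\in\omega$ by induction, and then invoke the greatest-lower-bound property of $\acko$ to conclude. Where you differ is in what drives the induction step, and the difference is worth noting. The paper's step never uses the axiom $\acko x\leq\aeko\acko x$: at stage $k$ it pulls $\aeko^{k}(x\pscom\aeko x)$ out of the meet $\acko(x\pscom\aeko x)=\bigsqcap_{n\in\omega}\aeko^{n}(x\pscom\aeko x)$ (the lower-bound half of the glb condition, invoked at every level $k$), distributes $\aeko^{k}$ over the implication to obtain $\aeko^{k}x\pscom\aeko^{k+1}x$, and finishes by Boolean modus ponens against the induction hypothesis. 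You instead use the lower-bound property only at $n=0$ (to get $\acko y\leq y$, hence $c\sqcap x\leq\aeko x$) and make the step run on the unfolding axiom $c\leq\aeko c$, i.e.\ CKL condition (1), together with a single application of the normal operator $\aeko$ to the induction hypothesis. So the two proofs isolate different sufficient fragments of the CKL axioms: the paper shows condition (\ref{mhalgax}) follows from the glb condition alone, while your argument shows it follows from the unfolding axiom plus only the $n=0$ lower bound and the ``greatest'' half of the glb condition. Both are fully rigorous; yours has the mild advantage of avoiding the iterated distribution of $\aeko^{k}$ over implication, replacing it with one-step reasoning.
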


\begin{proof}
Suppose $A$ is a CKL-algebra. Take any $x\in A$. 
We show that for any $n\in\omega$
\begin{equation}\label{mh-ax}
x\land\acko(x\pscom\aeko x)\leq\eko^{n}x 
\end{equation}
by induction on $n\in\omega$. 
The case $n=0$ is trivial. 
Suppose that 
\begin{equation}\label{indhyp}
x\land\acko(x\pscom\aeko x)\leq \eko^{k}x. 
\end{equation}
Since $A$ is a CKL-algebra, 
\begin{equation}\label{k+1}
x\land\acko(x\pscom\aeko x)
=
x\land\bigsqcap_{n\in\omega}\eko^{n}(x\pscom\aeko x)
\leq
\eko^{k}(x\pscom\aeko x)
\leq
\eko^{k}x\pscom\eko^{k+1} x.
\end{equation}
By (\ref{indhyp}) and (\ref{k+1}), 
$$
x\land\cko(x\pscom\aeko x)
\leq
\eko^{k+1} x.
$$
Hence, (\ref{mh-ax}) holds for any $n\in\omega$. 
\end{proof}

\begin{theorem}\label{mhckl}
An $\omega_{1}$-complete and $\omega_{1}$-multiplicative MH-algebra 
is a CKL-algebra. 
\end{theorem}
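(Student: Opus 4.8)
The plan is to verify the two defining conditions of a CKL-algebra for an arbitrary $\omega$-complete and $\omega$-multiplicative MH-algebra $A$, and an arbitrary $x\in A$. Condition (1), namely $\acko x\leq\aeko\acko x$, is immediate, since it is exactly MH-axiom (\ref{mh2}). All the work lies in condition (2): that $\acko x$ is the greatest lower bound of $\{\aeko^{n}x\mid n\in\omega\}$. Throughout I would freely use that every modal operator, and hence $\aeko$, is monotone, which follows from $\mdl(x\sqcap y)=\mdl x\sqcap\mdl y$.

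First I would extract an algebraic induction principle from MH-axiom (\ref{mhalgax}): for any $z\in A$, if $z\leq\aeko z$ then $z\leq\acko z$. The key observation is that $z\leq\aeko z$ is equivalent to $z\pscom\aeko z=-z\sqcup\aeko z=1$; instantiating (\ref{mhalgax}) at $z$ and using $\acko 1=1$ then yields $1=\acko(z\pscom\aeko z)\leq z\pscom\acko z=-z\sqcup\acko z$, i.e. $z\leq\acko z$. Combined with monotonicity of $\acko$, this gives the principle in the form I will use it: whenever $z\leq\aeko z$ and $z\leq x$, we have $z\leq\acko z\leq\acko x$.

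Next I would check that $\acko x$ is a lower bound of $\{\aeko^{n}x\mid n\in\omega\}$. Iterating (\ref{mh2}) with monotonicity of $\aeko$ shows by induction that $\acko x\leq\aeko^{n}\acko x$ for all $n$, and then (\ref{mh1}), i.e. $\acko x\leq x$, gives $\aeko^{n}\acko x\leq\aeko^{n}x$; hence $\acko x\leq\aeko^{n}x$ for every $n$. This part is purely finitary and uses neither $\omega$-completeness nor $\omega$-multiplicativity.

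Finally, set $y=\bigsqcap_{n\in\omega}\aeko^{n}x$, which exists by $\omega$-completeness, and note $y\leq\aeko^{0}x=x$. The crucial step, and the place where $\omega$-multiplicativity enters, is to show $y\leq\aeko y$: since $\aeko$ is $\omega$-multiplicative (which follows from $\omega$-multiplicativity of each $\ako_{i}$, $\modalop$ being finite), we get $\aeko y=\bigsqcap_{n\in\omega}\aeko^{n+1}x=\bigsqcap_{n\geq 1}\aeko^{n}x\geq y$. Applying the induction principle with $z=y$ then gives $y\leq\acko x$, while the lower-bound property gives $\acko x\leq y$; hence $\acko x=y$ is the greatest lower bound, establishing condition (2). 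I expect the main obstacle to be precisely the identity $\aeko y=\bigsqcap_{n}\aeko^{n+1}x$, as it requires commuting $\aeko$ past an infinite meet, which is exactly what $\omega$-multiplicativity supplies and why the hypothesis cannot be dropped.
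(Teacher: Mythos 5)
Your proof is correct and follows essentially the same route as the paper's: both arguments form $y=\bigsqcap_{n\in\omega}\aeko^{n}x$ by $\omega$-completeness, use $\omega$-multiplicativity to get $y\leq\aeko y$, feed this into MH-axiom (\ref{mhalgax}) (via $y\pscom\aeko y=1$ and $\acko 1=1$) to conclude $y\leq\acko y\leq\acko x$, and obtain the reverse inequality $\acko x\leq\aeko^{n}x$ by iterating (\ref{mh1}) and (\ref{mh2}). Your only additions — packaging the use of (\ref{mhalgax}) as an explicit ``induction principle'' and noting that $\omega$-multiplicativity of $\aeko$ must be derived from that of the $\ako_{i}$ since $\aeko$ is a defined operator — are presentational refinements of steps the paper performs implicitly.
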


\begin{proof}
Suppose that $A$ is an $\omega_{1}$-complete and $\omega_{1}$-multiplicative MH-algebra. 
Take any $x\in A$. 
Since $A$ is $\omega_{1}$-complete, $\bigsqcap_{n\in\omega}\aeko^{n}x\in A$. 
We show that $\acko x=\bigsqcap_{n\in\omega}\aeko^{n}x$. 
Let $z=\bigsqcap_{n\in\omega}\aeko^{n}x$. 
By $\omega_{1}$-multiplicativity, 
$$
\aeko z
=
\aeko\bigsqcap_{n\in\omega}\aeko^{n}x
=
\bigsqcap_{n\in\omega}\aeko^{n+1}x
\geq
\bigsqcap_{n\in\omega}\aeko^{n}x
=
z. 
$$
Hence, $z\pscom \aeko z=1$. 
By (\ref{mhalgax}) of Definition \ref{defmhalg}, 
$1\leq z\pscom\acko z$, which means $z\leq\acko z$. 
Therefore, 
$$
\bigsqcap_{n\in\omega}\aeko^{n}x
=
z
\leq
\acko z
=
\acko\bigsqcap_{n\in\omega}\aeko^{n}x
\leq
\acko x. 
$$
By (\ref{mh1}) and (\ref{mh2}) of Definition \ref{defmhalg}, for any $n\in\omega$,  
$$
\acko x\leq \aeko^{n}x. 
$$
Hence, 
$\acko x\leq \bigsqcap_{n\in\omega}\aeko^{n}x$. 
\end{proof}

\begin{corollary}
A Kripke frame $F$ is a CKL-frame if and only if $F\vl\lckl$. 
Hence, the class of CKL-frames is modally definable. 
\end{corollary}

\begin{proof}
By (\ref{dualalgebra}) and (\ref{amhalgckl}), and 
Theorem \ref{cklduality}, \ref{cklmh}, and \ref{mhckl}, 
\begin{align*}
F\in\fckl
&\eq 
F^{+}\in\ackl\\
&\eq 
F^{+}\in\amh\\
&\eq
F^{+}\in\defalgebra(\ckl)\\
&\eq 
F^{+}\vl\ckl\\
&\eq 
F\vl\ckl. 
\end{align*}
\end{proof}

We now demonstrate that $\ackl\subsetneqq\amh$.

\begin{lemma}\label{ultrapower}
Let $V$ be a subvariety of $\amh$.  
Suppose that, for each $n\in\omega$, there exists $A_{n}\in V$ and 
$a_{n}\in A$ such that  
\begin{equation}\label{ackolneqq}
\acko a_{n}\lneqq \bigsqcap_{i=0}^{n}\aeko^{i}a_{n}. 
\end{equation}
Then the ultraproduct $A=\prod_{n\in\omega} A_{n}/U$ is an element of  $V\setminus\ackl$, 
where $U$ is the set of all cofinite subsets of $\power(\omega)$. 
\end{lemma}

\begin{proof}
Let $a=(a_{n})_{n\in\omega}\in \prod_{n\in\omega}A_{n}$. 
Let $b_{n}=\bigsqcap_{i=0}^{n}\aeko^{i}a_{n}\in A_{n}$ for any $n\in\omega$ and 
$b=(b_{n})_{n\in\omega}\in \prod_{n\in\omega}A_{n}$. Then, for any $n\in\omega$, 
\begin{equation}\label{bleqena}
b/U\leq (\aeko^{n}a)/U, 
\end{equation}
since 
$$
\{m\in\omega\mid b_{m}\leq \aeko^{n}a_{m}\}
\supseteq
\{m\in\omega\mid m\geq n\}
\in
U. 
$$
On the other hand, 
\begin{equation}\label{clneqqb}
(\acko a)/U\lneqq b/U, 
\end{equation}
since 
$$
\{n\in\omega\mid \acko a_{n} \lneqq b_{n}\}
=
\omega
\in
U,  
$$
by \eqref{ackolneqq}. Hence, 
$\acko\left(a/U\right)$ is not the meet of 
$
\{\aeko^{n}\left(a/U\right)\mid n\in\omega\} 
$
by \eqref{bleqena} and \eqref{clneqqb}. 
\end{proof}

\begin{theorem}\label{ckl-mh}
$\ackl\subsetneqq\amh$. 
\end{theorem}

\begin{proof}
By Lemma \ref{ultrapower}, it is sufficient to show the existence of a 
set $\{A_{n}\}_{n\in\omega}$ of MH-algebras that satisfies \eqref{ackolneqq}. 
Define a Kripke frame $F=\langle W,R_{\ko_{0}},R_{\ko_{1}},R_{\cko}\rangle$ by 
$W=\omega\cup\{\infty\}$, 
$$
R_{\ko_{0}}=\mathrm{Eq}\bigl(
\{(0,\infty)\}
\cup
\{(2n,2n+1)\mid n\in\infty\}
\bigr),
$$
$$
R_{\ko_{1}}=\mathrm{Eq}\bigl(
\{(0,\infty)\}
\cup
\{(2n+1,2n+2)\mid n\in\infty\}
\bigr),
$$
where $\mathrm{Eq}(R)$ denotes the equivalence relation generated by $R$, 
and 
$$
R_{\cko}=\bigcup_{n\in\omega}(R_{\eko})^{n}, 
$$
where $R_{\eko}=R_{\ko_{0}}\cup R_{\ko_{1}}$
(Figure \ref{kripkeframe}). 
Then, $F\in\fckl$. Hence, $F^{+}\in\amh$ by Theorem \ref{cklduality}
and Theorem \ref{cklmh}. 
In $F^{+}$, 
$$
\bigcap_{i=0}^{n}\aeko^{i}\omega=\{m\in\omega \mid m\geq n\}\not=\emptyset=\acko\omega,  
$$
for any $n\in\omega$.
Define $A_{n}=F^{+}$ for each $n\in\omega$. 
\end{proof}


\begin{figure}[h]
\begin{tikzcd}
\infty
\arrow[d, shift right=2pt]\arrow[d, dashed, shift left=5pt]
&&&&&
\\
0
\arrow[u, shift left=5pt]\arrow[u, dashed, shift right=2pt]
\arrow[r, shift left=2pt]
&
1\arrow[l, shift left=2pt, "\ko_{0}"]\arrow[r, dashed, shift left=2pt]
&
2\arrow[l, dashed, shift left=2pt, "\ko_{1}"]\arrow[r, shift left=2pt]
&
3\arrow[l, shift left=2pt, "\ko_{0}"]\arrow[r, dashed, shift left=2pt]
&
4\arrow[l, dashed, shift left=2pt, "\ko_{1}"]\cdots
&
\cdots
\end{tikzcd}
\caption{Kripke frame $F$}\label{kripkeframe}
\end{figure}


As $R_{\ko_{0}}$ and $R_{\ko_{1}}$ are equivalence relations, 
Theorem \ref{ckl-mh} holds for subvarieties of $\amh$ corresponding to 
$\logic{K}$, $\logic{D}$, $\logic{T}$, $\logic{4}$, $\logic{5}$ 
and their combinations.

\begin{corollary}
The class of CKL-algebras is not modally definable. Hence, it is not a variety. 
\end{corollary}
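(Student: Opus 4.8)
The plan is to exploit the antitone Galois connection between classes of modal algebras and sets of formulas given by $\defalgebra$ and $\deflogic$. Recall that a class $C$ of modal algebras is modally definable precisely when it is a closed set of this connection, i.e.\ when $C=\defalgebra(\deflogic(C))$: if $C=\defalgebra(\gm)$ then every $\gamma\in\gm$ is valid throughout $C$, so $\gm\subseteq\deflogic(C)$ and hence $\defalgebra(\deflogic(C))\subseteq\defalgebra(\gm)=C$, while the inclusion $C\subseteq\defalgebra(\deflogic(C))$ always holds. Consequently, to show that $\ackl$ is \emph{not} modally definable it suffices to exhibit a single modal algebra lying in $\defalgebra(\deflogic(\ackl))$ but not in $\ackl$.

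First I would identify $\deflogic(\ackl)$. By the soundness and completeness recorded in (\ref{soundnesscompleteness}) we have $\deflogic(\ackl)=\lckl$, so $\defalgebra(\deflogic(\ackl))=\defalgebra(\ckl)$ is exactly the class of all modal algebras validating $\ckl$. The task thus reduces to finding an algebra that validates every theorem of $\ckl$ yet fails to be a CKL-algebra.

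Next I take the algebra $S$ constructed in the proof of Theorem \ref{ckl-mh}. By that theorem $S$ is an MH-algebra that is not a CKL-algebra, so $S\notin\ackl$. On the other hand, (\ref{amhcompleteness}) asserts $\ckl=\deflogic(\amh)$, which means that every formula of $\ckl$ is valid in every MH-algebra; applying this to $S$ gives $S\vl\ckl$, that is $S\in\defalgebra(\ckl)=\defalgebra(\deflogic(\ackl))$. Hence $\ackl\subsetneqq\defalgebra(\deflogic(\ackl))$, and by the criterion above $\ackl$ is not modally definable.

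Finally, for the second assertion I would note that every variety of modal algebras is modally definable: an equation $\sigma=\tau$ in the modal signature holds in an algebra exactly when the formula $\sigma\equiv\tau$, reading the terms as formulas, is valid there, so any equationally axiomatized class has the form $\defalgebra(\gm)$. A class that is not modally definable therefore cannot be a variety, and the claim follows. The only point requiring care is the verification that $S$ validates all of $\ckl$ rather than merely the three MH-axioms of Definition \ref{defmhalg}; this is precisely the completeness half of (\ref{amhcompleteness}), which lets me replace a direct (and laborious) check of arbitrary $\ckl$-theorems on $S$ by the single membership $S\in\amh$ already established in Theorem \ref{ckl-mh}. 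This is the step where the real work has been done, so the corollary itself is then essentially a bookkeeping argument about the Galois connection.
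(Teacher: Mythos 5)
Your proof is correct and follows essentially the same route as the paper: both rest on the Galois-connection criterion that modal definability of $C$ means $C=\defalgebra(\deflogic(C))$, both exhibit the algebra $S$ of Theorem \ref{ckl-mh} as a member of $\defalgebra(\ckl)\setminus\ackl$ using (\ref{soundnesscompleteness}) and (\ref{amhcompleteness}), and both derive the variety claim from the identification of equations with formulas. The only cosmetic difference is that the paper first establishes the full equality $\amh=\defalgebra(\ckl)$ (by noting $\amh$ is itself a variety, hence modally definable), whereas you observe that the single inclusion $\amh\subseteq\defalgebra(\ckl)$ --- i.e., that $S$ validates every theorem of $\ckl$ --- already suffices.
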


\begin{proof}
By (\ref{soundnesscompleteness}), (\ref{amhalgckl}), and Theorem \ref{ckl-mh}, 
$$
\ackl\subsetneqq \amh=\defalgebra(\ckl)=\defalgebra(\deflogic(\ackl)). 
$$
Therefore, $\ackl$ is not modally definable by (\ref{definability}). 
Hence, it is not equationally 
definable. 
Therefore, it is not a variety. 
\end{proof}

Below, we present a concrete example of a modal algebra $A$ in $\amh\setminus\ackl$. 
Let $x\in 2^{\omega}$. 
We say that $x$ is {\em finite}, 
if the cardinality of the set $\{n\mid x(n)=1\}$ is finite,  
and say that $x$ is {\em cofinite}, if the cardinality of the set 
$\{n\mid x(n)=0\}$ is finite. 
The constant function from $\omega$ to  $2$ that takes the value $0$ (resp. $1$)
is simply denoted as $0$ (resp. $1$), if there is no confusion. 
By considering $2$ as a two-valued Boolean algebra,  
$2^{\omega}$ is a complete Boolean algebra.  
Define $S\subseteq 2^{\omega}$ by 
$$
S=
\{x\mid 
\text{$x$ is finite or cofinite}
\}.
$$
Then, $S$ is a sub-Boolean algebra of $2^{\omega}$. 
For each cofinite $x\in S$, define $k(x)\in\omega$ as follows: 
$$
k(x)=\min\{i\mid 
\text{$i$ is an even number and for each even number $j\geq i$, 
$x(j)=1$}
\}
$$ 
Since $x$ is cofinite, $k(x)$ is well-defined. 
Suppose that $|\modalop|=N$ and  
define modal operators $\ako_{n}$ ($n\in\modalop$) on $S$ as follows
(see Figure \ref{knx}): 
\begin{enumerate}
\item
if $x$ is finite, then $\ako_{n}(x)=0$;
\item
if $x=1$, then $\ako_{n}(x)=1$;
\item
if $x$ is cofinite, $x\not=1$, and $n\not\equiv k(x)$ (mod $N$), then 
$$
\ako_{n}(x)(i)
=
\begin{cases}
0
& \text{if $i$ is even and $i< k(x)$}\\
x(i) 
& \text{if $i$ is odd or $i\geq k(x)$};
\end{cases}
$$
\item
if $x$ is cofinite, $x\not= 1$, and $n\equiv k(x)$ (mod $N$), then 
$$
\ako_{n}(x)(i)
=
\begin{cases}
0
& \text{if $i$ is even and $i\leq k(x)$}\\
x(i) 
& \text{if $i$ is odd or $i> k(x)$}.\end{cases}
$$
\end{enumerate}

\noindent
Define modal operator $\acko$ on $S$ by 
$$
\acko x= 
\begin{cases}
1
& \text{if $x=1$}\\
0
& \text{otherwise}. 
\end{cases}
$$
It is easy to see that for any cofinite $x\in S$ that is not $1$,  
$$
\aeko(x)(i)
=
\bigsqcap_{n\in\modalop}\ako_{n}(x)(i)
=
\begin{cases}
0
& \text{if $i$ is even and $i\leq k(x)$}\\
x(i) 
& \text{if $i$ is odd or $i> k(x)$}.
\end{cases}
$$

\begin{figure}
$$
\begin{array}{c|cccccccc}
i &   0 & 2    & \cdots         & k(x)-2 & k(x) & k(x)+2 & k(x)+4 & \cdots \\
\hline
x   &\ast & \ast & \ast\cdots\ast & 0      & 1    & 1      & 1      &  1\cdots \\
\ako_{n}x\text{ ($n\not\equiv k(x)$)}
    &0    & 0    & 0\cdots0       & 0      & 1    & 1      & 1      &  1\cdots \\
\ako_{n}x\text{ ($n\equiv k(x)$)}
    &0    & 0    & 0\cdots0       & 0      & 0    & 1      & 1      &  1\cdots \\
\aeko x
    &0    & 0    & 0\cdots0       & 0      & 0    & 1      & 1      &  1\cdots 
\end{array}
$$
\caption{$\ako_{n} x(i)$ and $\aeko x(i)$ for cofinite $x\not=1$ and even $i$}\label{knx}
\end{figure}

\noindent
We first check that $S$ is a modal algebra. 
By definition, $\ako_{n} 1=1$ ($n\in\modalop$) and $\acko 1=1$. 
It is straightforward to show that 
$\acko (x\sqcap y)=\acko x\sqcap \acko y$. 
We check that $\ako_{n} (x\sqcap y)=\ako_{n}x\sqcap \ako_{n}y$ holds. 
The cases that $x=1$ or $y=1$, and $x$ is finite or $y$ is finite are straightforward. 
Suppose that $x$ and $y$ are cofinite, $x\not=1$, and $y\not=1$. 
It is easy to see that  
$\ako_{n} (x\sqcap y)(i)=(\ako_{n}x\sqcap \ako_{n}y)(i)$, for each odd number 
$i\in\omega$.  
Without loss of generality, we may assume that $k(x)\leq k(y)$. 
Then, $k(x\sqcap y)=k(y)$. 
Suppose that  $n\equiv k(y)$ (mod $N$).  Then, for each even number $i\in\omega$, 
$$
\ako_{n} (x\sqcap y)(i)=0
\eq
i\leq k(x\sqcap y)
\eq
i\leq k(y)
\eq 
(\ako_{n}x\sqcap \ako_{n}y)(i)=0. 
$$
The case $n\not\equiv k(y)$ (mod $N$) is shown in the same way. 
Hence, $S$ is a modal algebra. 
Next, we show that $S$ is an MH-algebra. 
It is easy to see that $\acko x\leq x$ and $\acko x\leq \aeko\acko x$ 
hold. We show that $\acko(x\pscom \aeko x)\leq x\pscom \acko x$ holds.
The cases that $x=0$ and $x=1$ are straightforward. Suppose not. 
Then, $x\not\leq\aeko x$. Therefore, $x\pscom\aeko x\not=1$. 
Hence, 
$$
\acko(x\pscom\aeko x)=0
\leq
x\pscom \acko x. 
$$ 
Finally, we show that $S$ is not a CKL-algebra. Let $a\in S$ be 
$$
a(i)
=
\begin{cases}
0 & (i=0)\\
1 & (i\not=0)
\end{cases}. 
$$
Then, in $2^{\omega}$, 
$$
\bigsqcap_{n\in\omega}\aeko^{n}a(i)
=
\begin{cases}
0 & (\text{$i$ is even})\\
1 & (\text{$i$ is odd})
\end{cases},  
$$
but the greatest upper bound of the set $\{\aeko^{n}a\mid n\in\omega\}$ does 
not exist in $S$.

In the last part of the section, we show that the free MH-algebras are 
CKL-algebras. 
Let $\psnon$ be a proof system and  
let $\sim_{\psnon}$ be the binary relation on $\cformulas$ defined such that 
$\phi\sim_{\psnon}\psi$ if and only if $\phi\equiv\psi$ is derivable 
in $\psnon$, 
for each formulas $\phi$ and $\psi$. 
We write $\linnon$ for the Lindenbaum-Tarski algebra
$\cformulas/\text{$\sim_{\psnon}$}$.
For each formula $\phi$, we write $|\phi|_{\psnon}$ for the equivalence 
class of $\phi$ in $\cformulas/\text{$\sim_{\psnon}$}$.

Let $X$ be a set of variables and 
$\sim_{\mathrm{MH}}$ be the binary relation on the set $T(X)$ of all terms 
over $X$ 
defined such that 
for each $t_{1}$ and $t_{2}$ in $T(X)$, $t_{1}\sim_{\mathrm{MH}}t_{2}$ if and 
only if $t_{1}=t_{2}$ holds in every MH-algebra. 
It is known that $\sim_{\mathrm{MH}}$ is a congruence relation, and 
that the free MH-algebra over $X$ is $T(X)/\sim_{\mathrm{MH}}$. 
We denote the free MH-algebra $T(X)/\sim_{\mathrm{MH}}$ over $X$ 
by $\freemh(X)$. 
For each $t\in T(X)$, we denote 
the equivalence class of $t$ in 
$T(X)/\sim_{\mathrm{MH}}$ 
by $|t|_{\mathrm{MH}}$. 
We define the free CKL-algebra 
$\freeomega(X)$ over $X$ and 
symbols $|t|_{\mathrm{CKL}}$ 
in the same manner.

\begin{theorem}\label{freemh}
For each set $X$, the free 
MH-algebra $\freemh(X)$ over $X$ is a CKL-algebra. 
\end{theorem}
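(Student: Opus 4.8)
The plan is to verify directly the two defining conditions of a CKL-algebra for $\freemh(X)$. Condition (1), namely $\acko x\leq\aeko\acko x$, and the fact that $\acko|p|_{\mathrm{MH}}$ is a \emph{lower} bound of $\{\aeko^{n}|p|_{\mathrm{MH}}\mid n\in\omega\}$ (i.e.\ that $\acko x\leq\aeko^{n}x$ for every $n$) hold in every MH-algebra, the latter being exactly what was obtained inside the proof of Theorem \ref{mhckl} from items (\ref{mh1}) and (\ref{mh2}) of Definition \ref{defmhalg}. Hence all the work lies in condition (2): I must show that in $\freemh(X)$ the element $\acko|p|_{\mathrm{MH}}$ is the \emph{greatest} lower bound of $\{\aeko^{n}|p|_{\mathrm{MH}}\mid n\in\omega\}$ for every term $p$.

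First I would translate the problem into the proof systems. By completeness (\ref{amhcompleteness}), for terms $a,b$ read as formulas one has $a\sim_{\mathrm{MH}}b$ iff $a\equiv b$ is derivable in $\psmh$; consequently, identifying $X$ with a subset of $\propvar$, $\freemh(X)$ is realised by the formulas over $X$ inside the Lindenbaum--Tarski algebra $\linmh$. Since $\psmh$ and $\psomega$ derive exactly the same theorems, the corresponding relations $\sim$ coincide, so $\linmh=\linomega$ and I am free to reason with the $\omega$-rule of $\psomega$.

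The core step is then immediate. Suppose $|q|_{\mathrm{MH}}\leq\aeko^{n}|p|_{\mathrm{MH}}$ for every $n\in\omega$, where $p,q$ are terms over $X$. This says precisely that $q\supset\eko^{n}p$ is derivable (in $\psmh$, hence in $\psomega$) for each $n$. Applying the $\omega$-rule in the form (\ref{cright1}) with $\gamma=q$ yields that $q\supset\cko p$ is derivable, i.e.\ $|q|_{\mathrm{MH}}\leq\acko|p|_{\mathrm{MH}}$. Thus every lower bound of $\{\aeko^{n}|p|_{\mathrm{MH}}\mid n\in\omega\}$ lies below $\acko|p|_{\mathrm{MH}}$, which is therefore the greatest lower bound, so $\freemh(X)$ is a CKL-algebra.

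Two points remain to be handled. For an arbitrary, possibly uncountable, $X$ I would reduce to the countable case: any pair of terms $p,q$ involves only finitely many variables $X_{0}\subseteq X$, and a retraction $X\to X_{0}$ induces a homomorphism splitting the canonical map $\freemh(X_{0})\to\freemh(X)$, so that map is an order-reflecting embedding and the greatest-lower-bound computation for $p,q$ transfers from $\freemh(X_{0})$, which is covered above. The step I expect to be the genuine obstacle is recognising that condition (2) cannot be obtained purely equationally from the MH-axioms — Theorem \ref{ckl-mh} exhibits an MH-algebra where it fails, and indeed a subalgebra of a CKL-algebra need not be a CKL-algebra — so the argument must leave the algebraic setting and exploit the $\omega$-rule, which is available on the free algebra only because its elements are classes of formulas and because $\psmh$ and $\psomega$ are equivalent.
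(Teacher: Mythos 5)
Your proof is correct and takes essentially the same route as the paper's: you translate the greatest-lower-bound condition into derivability via the algebraic completeness (\ref{amhcompleteness}), use the equivalence of $\psmh$ and $\psomega$ (i.e.\ $\linmh=\linomega$, which rests on Kripke completeness of both systems) to apply the $\omega$-rule (\ref{cright1}), and reduce arbitrary $X$ to the countable case by a transfer between free algebras (where you give a retraction argument, the paper cites Theorem 11.4 of Burris--Sankappanavar). One aside in your last paragraph is false, though immaterial to the argument: a modal subalgebra of a CKL-algebra \emph{is} always a CKL-algebra, since the glb $\acko x$ computed in the ambient algebra already lies in the subalgebra and every lower bound in the subalgebra is a lower bound in the ambient algebra; the correct witness that condition (2) is not equationally forced by the MH-axioms is only Theorem \ref{ckl-mh} itself.
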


\begin{proof}
Let $\phi$ be a formula of the common knowledge logic. 
By (3) and (4) of Definition \ref{defpsax} and (\ref{cright1}),  
\begin{equation}
|\cko\phi|_{\psomega}=
\bigsqcap_{n\in\omega}|\eko^{n}\phi|_{\psomega}
\end{equation}
holds in the Lindenbaum-Tarski algebra $\linomega$ of $\psomega$.  
Therefore, $\linomega$ is a CKL-algebra. 
Since $\psmh$ and $\psomega$ are equivalent, 
$\linmh=\linomega$. 
Suppose that $X$ is countable. 
Then, we can recursively define a bijection
from the set of formulas of the common knowledge logic to the 
set of terms of modal algebras. 
It is straightforward to see that $\phi\equiv\psi$ is derivable in $\psomega$ if 
and only if $\phi=\psi$ holds in $\ackl$, and the same 
relation holds between $\psmh$ and $\amh(X)$. 
Therefore, 
\begin{equation*}\label{freecountable}
\freeomega(X)\cong\linomega=\linmh\cong\freemh(X). 
\end{equation*}
Hence, $\freemh(X)$ is a CKL-algebra.  
Take any set $Y$ and 
any $|t_{1}|_{\mathrm{MH}}$ and $|t_{2}|_{\mathrm{MH}}$ in $\freemh(Y)$. 
Let $y_{1},\ldots,y_{n}$ be the list of variables in $Y$ which occur in 
$t_{1}$ or $t_{2}$,  
and let $X$ be a countable set of variables which includes all $y_{1},\ldots,y_{n}$. 
Then, it is known that 
\begin{align*}
|t_{1}|_{\mathrm{MH}}=|t_{2}|_{\mathrm{MH}} \text{ in $\freemh(Y)$ }
\eq
|t_{1}|_{\mathrm{MH}}=|t_{2}|_{\mathrm{MH}} \text{ in $\freemh(X)$ }
\end{align*}
holds (see, e.g., Theorem 11.4 of \cite{brr-snk80}). 
Now, let $|s|_{\mathrm{MH}}$ and $|t|_{\mathrm{MH}}$ be in $\freemh(Y)$. 
Take any $X$ which includes all variables in $s$ and $t$. 
Then, all variables that occur in $\acko s$, $\aeko^{n} s$ ($n\in\omega$), 
and $t$ are in $X$. 
Since $\freemh(X)$ is a CKL-algebra, 
\begin{align*}
& 
\text{for any $n\in\omega$, }
|t|_{\mathrm{MH}}\leq \aeko^{n}|s|_{\mathrm{MH}} \text{ in $\freemh(Y)$}\\
\eq&
\text{for any $n\in\omega$, }
|t|_{\mathrm{MH}}\leq \aeko^{n}|s|_{\mathrm{MH}} \text{ in $\freemh(X)$}\\
\thn&
|t|_{\mathrm{MH}}\leq \acko|s|_{\mathrm{MH}} \text{ in $\freemh(X)$}\\
\eq&
|t|_{\mathrm{MH}}\leq \acko|s|_{\mathrm{MH}} \text{ in $\freemh(Y)$}. 
\end{align*}
It is clear that for any $n\in\omega$, 
$\acko|s|_{\mathrm{MH}}\leq \aeko^{n}|t|_{\mathrm{MH}}$ holds in $\freemh(Y)$. 
Hence, 
$$
\acko|s|_{\mathrm{MH}}=\bigsqcap_{n\in\omega}\aeko^{n}|s|_{\mathrm{MH}}
$$
holds in $\freemh(Y)$. 
\end{proof}

It is known that an equation holds in a class of algebras 
if and only if it holds in the free algebra over a countable 
set of variables (see, e.g., corollary 11.5 of \cite{brr-snk80}). 
However, there exists an MH-algebra which 
does not satisfy the equation 
$
\acko x=\bigsqcap_{n\in\omega}\aeko^{n}x
$, 
while the free MH-algebra over a countable set $X$ satisfies it. 
Of course, this is not a paradox, as
$\bigsqcap_{n\in\omega}\aeko^{n}x$ is not a term of MH-algebras.

\section{Infinitary common knowledge logic}\label{sec:inflogic}

In this section, we discuss the infinitary common knowledge logic. 
We define the infinitary extensions 
$\ipsmh$ and $\ipsomega$ of $\psmh$ and $\psomega$, respectively.  
The system $\ipsomega$ is originally introduced by Kaneko-Nagashima-Suzuki-Tanaka 
\cite{knk-ngs-szk-tnk}. 
It is proved in \cite{knk-ngs-szk-tnk} that  
$\ipsomega$ is sound and complete with respect to $\fckl$. 
We show that $\ipsmh$ and $\ipsomega$ are equivalent.  
Consequently, it follows that $\ipsmh$ is also 
sound and complete with respect to $\fckl$.

We define syntax and semantics for the infinitary common knowledge logic. 
First, we define syntax. 
We extend the language by adding  two logical connectives $\ior$ and $\iand$, 
which denote countable disjunction and conjunction, respectively. 
Then, we define the set $\icformulas$ of formulas of the infinitary common knowledge logic 
as the least extension of $\cformulas$ 
which satisfies the following: 
if $\gm$ is a countable set of formulas of $\icformulas$, then 
$\ior \gm$ and $\iand \gm$ are in $\icformulas$. 
Next, we define semantics. 
An algebraic model for the infinitary common knowledge logic 
is a pair 
$\langle A,v\rangle$, where  
$A$ 
is an $\omega_{1}$-complete and $\omega_{1}$-multiplicative modal algebra and  
$v$ is a mapping 
from the set $\propvar$ of propositional variables to $A$.
For each valuation $v$ on $A$ and each countable set $\gm$ of formulas, 
we define 
$$
v\left(\ior \gm\right)=\bigsqcup_{\gamma\in\gm}v(\gamma),\   
v\left(\iand \gm\right)=\bigsqcap_{\gamma\in\gm}v(\gamma). 
$$
A Kripke model for the infinitary common knowledge logic 
is a pair $\langle F,v\rangle$, where  
$F=\langle W,\{R_{\ko_{i}}\}_{i\in\modalop},R_{\cko}\rangle$ 
is a Kripke frame and  
$v$ is a mapping 
from the set $\propvar$ of propositional variables to $\power(W)$.
For each valuation $v$ on $F$ and each countable set $\gm$ of formulas, 
we define
$$
v\left(\ior \gm\right)=\bigcup_{\gamma\in\gm}v(\gamma),\   
v\left(\iand \gm\right)=\bigcap_{\gamma\in\gm}v(\gamma). 
$$

Now, we define $\ipsmh$ and $\ipsomega$. 
First, we define $\ipsmh$. 

\begin{definition}\label{defipsax}
The proof system $\ipsmh$ for the infinitary common knowledge logic 
consists of all axiom schemas and inference rules of $\psmh$ and the 
following axiom schemas and inference rules:  
\begin{equation}\label{infbarcan}
\iand_{\gamma\in\gm}\mdl\gm\supset\mdl\iand\gm \ \ 
\text{for each modal operator $\mdl$},
\end{equation}
\begin{equation}\label{infaxiom}
\gamma\supset\ior\gm,\ \iand\gm\supset\gamma\ \ 
\text{($|\gm|\leq\omega$, $\gamma\in\gm$)}, 
\end{equation}
\begin{equation}\label{infrule}
\dfrac
{\gamma\supset\phi\text{\rm \hspace{10pt}($\forall\gamma\in\gm$) }}
{\ior\gamma\supset\phi},\ 
\dfrac
{\phi\supset\gamma\text{\rm \hspace{10pt}($\forall\gamma\in\gm$) }}
{\phi\supset\iand\gamma}\ \ 
\text{($|\gm|\leq\omega$)}.
\end{equation}

\end{definition}

Next, we define  $\ipsomega$, which 
is originally introduced by Kaneko-Nagashima
(the system $\mathrm{GL}_{\omega}$ in \cite{knk-ngs96, knk-ngs97}). 

\begin{definition}\label{defipscko}
The proof system $\ipsomega$ 
consists of all axioms and inference rules of $\psomega$ and axioms
(\ref{infbarcan}) and (\ref{infaxiom}) 
and inference rules
(\ref{infrule}). 
\end{definition}

It is proved in \cite{knk-ngs-szk-tnk} that 
$\ipsomega$ is Kripke complete:

\begin{theorem}\label{infalgcompleteness}
(\cite{knk-ngs-szk-tnk}). 
For each formula $\phi\in\icformulas$,  
$\phi$ is provable in $\ipsomega$ if and only if 
it is valid in the class $\fckl$ of CKL-frames. 
\end{theorem}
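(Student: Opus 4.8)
The plan is to prove soundness and completeness separately, obtaining completeness through an algebraic representation argument that reuses the machinery of Section~\ref{sec:models}. \emph{Soundness} is routine: one checks that every axiom and rule of $\ipsomega$ preserves validity over CKL-frames. The only non-trivial cases are the $\omega$-rule and the infinitary principles. In any Kripke model over a CKL-frame one has $v(\cko\phi)=\bigcap_{n\in\omega}v(\eko^{n}\phi)$, precisely because $R_{\cko}$ is the reflexive and transitive closure of $R_{\eko}$; this makes (\ref{cright}) sound. Likewise the clauses $v(\ior\gm)=\bigcup_{\gamma\in\gm}v(\gamma)$ and $v(\iand\gm)=\bigcap_{\gamma\in\gm}v(\gamma)$ make (\ref{infbarcan}), (\ref{infaxiom}) and (\ref{infrule}) sound.

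For \emph{completeness}, suppose $\phi$ is not provable in $\ipsomega$. First I would form the Lindenbaum--Tarski algebra $L=\icformulas/{\sim}$, where $\phi\sim\psi$ iff $\phi\equiv\psi$ is provable. The infinitary rules (\ref{infrule}) together with (\ref{infaxiom}) guarantee that $L$ is $\omega$-complete and that $|\ior\gm|=\bigsqcup_{\gamma\in\gm}|\gamma|$ and $|\iand\gm|=\bigsqcap_{\gamma\in\gm}|\gamma|$, while axiom (\ref{infbarcan}) makes $L$ $\omega$-multiplicative. Since $\ipsomega$ extends $\psomega$, and $\psomega$ is equivalent to $\psmh$, the algebra $L$ validates the MH-axioms, so $L$ is an MH-algebra; by Theorem~\ref{mhckl} it is therefore a CKL-algebra, and in particular $|\cko\psi|=\bigsqcap_{n\in\omega}|\eko^{n}\psi|$ holds in $L$. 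Because $\phi$ is unprovable, $|\phi|\neq 1$.

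Next I would represent $L$ relationally. Let the worlds be the prime filters of $L$ that \emph{preserve} every countable meet and join arising from the infinitary connectives and from the identities $|\cko\psi|=\bigsqcap_{n}|\eko^{n}\psi|$, and define the accessibility relations canonically, for instance $P\,R_{\ko_{i}}\,Q$ iff $\{\,|\psi|\mid |\ko_{i}\psi|\in P\,\}\subseteq Q$. A Rasiowa--Sikorski style lemma supplies, for the fixed element $|\phi|\neq 1$, such a prime filter $P_{0}$ with $|\phi|\notin P_{0}$. The standard truth lemma then gives $|\psi|\in P$ iff $P$ lies in the interpretation of $\psi$, and preservation of countable meets and joins is exactly what carries this induction through $\ior$, $\iand$ and $\cko$. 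It then remains to verify that the canonical frame is a CKL-frame, i.e.\ that $R_{\cko}$ is the reflexive and transitive closure of $R_{\eko}=\bigcup_{i}R_{\ko_{i}}$: the inclusion of the closure into $R_{\cko}$ follows from $\cko\psi\supset\psi$ and $\cko\psi\supset\eko\cko\psi$, while the reverse inclusion uses that the filters preserve $|\cko\psi|=\bigsqcap_{n}|\eko^{n}\psi|$, so that a pair $P\,R_{\cko}\,Q$ not linked by finitely many $R_{\eko}$-steps could be separated by some $\eko^{n}\psi$, yielding $|\cko\psi|\in P$ but $|\psi|\notin Q$, a contradiction.

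The hard part will be the representation step: showing that there are enough prime filters preserving the prescribed countable meets and joins. Since $\icformulas$ has the cardinality of the continuum, one cannot naively invoke a single Rasiowa--Sikorski lemma for all the infinitary conditions simultaneously; the construction must be localized to the countably many identities relevant to separating a given pair of points, and the collapse of $R_{\cko}$ to exactly the reflexive transitive closure hinges on this localized preservation. Once $\ipsomega$-completeness is in hand, the equivalence of $\ipsmh$ and $\ipsomega$ established in the remainder of this section transfers Kripke completeness to $\ipsmh$ as well.
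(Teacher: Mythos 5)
First, a framing point: the paper does not prove this theorem at all --- it is imported from \cite{knk-ngs-szk-tnk} --- so your proposal is a reconstruction rather than a rival to a proof in the text. Your soundness half and your analysis of the Lindenbaum--Tarski algebra $L$ are essentially right, though the detour through the MH-axioms is shakier than it looks: the equivalence of $\psmh$ and $\psomega$ is a statement about \emph{finitary} formulas, so it does not by itself give the MH-schema at infinitary instances of $\phi$. The clean route (and the one the paper takes in its algebraic completeness theorem) is direct: axioms (3), (4) of Definition \ref{defpsax} give $|\cko\psi|\leq|\eko^{n}\psi|$ for all $n$, and the $\omega$-rule (\ref{cright1}) makes $|\cko\psi|$ the greatest lower bound, so $L$ is an $\omega$-complete, $\omega$-multiplicative CKL-algebra with no appeal to Theorem \ref{mhckl}.

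The genuine gap is your final step, where you define $R_{\cko}$ canonically and claim it equals the reflexive--transitive closure of $R_{\eko}$. The inclusion of the closure into $R_{\cko}$ is fine, but your argument for the converse is a non sequitur: ``separation by some $\eko^{n}\psi$'' for a \emph{single} $n$ yields nothing, since preservation gives $|\cko\psi|\in P$ only when $|\eko^{n}\psi|\in P$ for \emph{all} $n$ simultaneously, and unreachability of $Q$ from $P$ by finite $R_{\eko}$-chains is a global structural property that need not be witnessed by any formula. Indeed it cannot be derived from membership data at all: writing $v(\psi)=\{P\mid|\psi|\in P\}$, preservation plus the existence lemma show that the operators $\mdl_{R_{\cko}^{can}}$ and $\mdl_{(R_{\eko}^{can})^{*}}$ agree on all \emph{definable} sets $v(\psi)$ (both compute $v(\cko\psi)$), so the algebra is blind to the difference between the two relations, while the frame condition demands equality of the relations themselves. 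This is exactly the failure point of canonical models for common knowledge logic --- already in the finitary case the canonical $R_{\cko}$ strictly contains the closure, which is why \cite{mey-vdh95} resort to filtration --- and restricting to preserving filters does not repair it; for instance, in the complex algebra of the successor frame on $\omega$, any two non-principal ultrafilters are canonically $R_{\cko}$-related, whereas each world canonically $R_{\eko}$-reaches only countably many others. The standard repair, which is also what the paper's closing paragraph sketches via the extended J\'{o}nsson--Tarski representation of \cite{tnk-ono98,tnknoncpt}, is to discard the canonical $R_{\cko}$ and \emph{define} $R_{\cko}:=(R_{\eko})^{*}$, so the frame is a CKL-frame by construction; the truth lemma for $\cko\psi$ then follows from preservation (if $|\cko\psi|\notin P$, some $|\eko^{n}\psi|\notin P$, and $n$ iterations of the existence lemma, which uses (\ref{infbarcan}), produce an $R_{\eko}$-chain ending in a world omitting $|\psi|$; conversely $|\cko\psi|\leq|\eko^{n}\psi|$ handles every reachable world). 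With that replacement, and your correct localization of the designated meets and joins to the countable fragment generated by the target formula, the argument goes through.
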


We show the algebraic completeness of $\ipsmh$ and $\ipsomega$. 

\begin{theorem}
For each formula $\phi\in\icformulas$,  the following conditions 
are equivalent: 
\begin{enumerate}
\item
$\phi$ is provable in $\ipsomega$;
\item
$\phi$ is provable in $\ipsmh$;
\item
$\phi$ is valid in the class of $\omega_{1}$-complete and 
$\omega_{1}$-multiplicative CKL-algebras. 
\end{enumerate}
\end{theorem}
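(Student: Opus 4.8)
The plan is to establish the cycle of implications (1) $\thn$ (2) $\thn$ (3) $\thn$ (1), so that the three conditions become equivalent; as a byproduct this shows that $\ipsmh$ and $\ipsomega$ prove exactly the same formulas, and combined with the cited Kripke completeness of $\ipsomega$ it yields Kripke completeness of $\ipsmh$. Of the three legs, (2) $\thn$ (3) is a soundness argument, (3) $\thn$ (1) a completeness argument that reduces to the duality of Theorem \ref{cklduality}, and (1) $\thn$ (2) is the only step with genuinely new content; it is where I expect the main difficulty.

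For (1) $\thn$ (2) it suffices to check that every axiom and rule of $\ipsomega$ is available in $\ipsmh$. Since the two systems share their finitary base together with all the infinitary axioms and rules, the only ingredient of $\ipsomega$ not already present in $\ipsmh$ is the $\omega$-rule (\ref{cright}), so the task reduces to deriving that rule inside $\ipsmh$. The key lemma is that $\ipsmh$ proves $\cko\phi\equiv\iand_{n\in\omega}\eko^{n}\phi$. The direction $\cko\phi\supset\iand_{n}\eko^{n}\phi$ follows from the axioms $\cko\phi\supset\phi$ and $\cko\phi\supset\eko\cko\phi$ by iterating with $\logic{K}$ and necessitation and then applying the $\iand$-introduction rule of (\ref{infrule}); for the converse I would put $\psi=\iand_{n}\eko^{n}\phi$, derive $\psi\supset\eko\psi$ by applying (\ref{infbarcan}) to each $\ko_{i}$, feed the theorem $\psi\supset\eko\psi$ into the induction axiom $\cko(\psi\supset\eko\psi)\supset(\psi\supset\cko\psi)$ of Definition \ref{defpsax} to obtain $\psi\supset\cko\psi$, and finish with $\cko\psi\supset\cko\phi$ (from $\psi\supset\phi$). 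To obtain the full rule I would then push the countable conjunction through the nested context $C[\,\cdot\,]=\mdl_{1}(\phi_{1}\supset\cdots\mdl_{k}(\phi_{k}\supset\cdot)\cdots)$: from the premises $\gamma\supset C[\eko^{n}\phi]$ the $\iand$-introduction rule gives $\gamma\supset\iand_{n}C[\eko^{n}\phi]$, and an induction on $k$ --- using (\ref{infbarcan}) to move each $\mdl_{j}$ past $\iand$ and the infinitary propositional fact $\iand_{n}(\phi_{j}\supset\chi_{n})\supset(\phi_{j}\supset\iand_{n}\chi_{n})$ (derivable from (\ref{infaxiom}) and (\ref{infrule})) --- yields $\iand_{n}C[\eko^{n}\phi]\supset C[\iand_{n}\eko^{n}\phi]$. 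Monotonicity of the context (a consequence of $\logic{K}$ and necessitation for each $\mdl_{j}$) together with $\iand_{n}\eko^{n}\phi\supset\cko\phi$ then delivers $\gamma\supset C[\cko\phi]$. This derivation is the main obstacle, as it is the one place where the infinitary machinery must be used to simulate the $\omega$-rule.

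For (2) $\thn$ (3) I would verify that $\ipsmh$ is sound with respect to the class of $\omega$-complete and $\omega$-multiplicative CKL-algebras. Any such algebra is a CKL-algebra, hence an MH-algebra by Theorem \ref{cklmh}, so the defining conditions of Definition \ref{defmhalg} validate the three $\cko$-axioms of $\psmh$, while the remaining finitary axioms and rules are sound in every modal algebra. The infinitary axiom (\ref{infbarcan}) is sound by $\omega$-multiplicativity, and (\ref{infaxiom}) together with the rules (\ref{infrule}) are sound because $\omega$-completeness guarantees that $\bigsqcup\gm$ and $\bigsqcap\gm$ are genuinely the least upper bound and the greatest lower bound.

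For (3) $\thn$ (1) I would pass through Kripke frames. For every CKL-frame $F$, the complex algebra $F^{+}$ is complete and completely multiplicative, hence $\omega$-complete and $\omega$-multiplicative, and it is a CKL-algebra by Theorem \ref{cklduality}; moreover the correspondence $F\vl\phi\eq F^{+}\vl\phi$ extends from finitary to infinitary formulas, since the Kripke valuation interprets $\ior$ and $\iand$ as union and intersection, which coincide with join and meet in $F^{+}=\power(W)$, so a straightforward induction on $\phi\in\icformulas$ shows the two valuations agree. Consequently, if $\phi$ is valid in all $\omega$-complete and $\omega$-multiplicative CKL-algebras then it is valid in every $F^{+}$, hence in every CKL-frame, and the cited Kripke completeness of $\ipsomega$ yields provability of $\phi$ in $\ipsomega$. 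This closes the cycle and establishes the three equivalences.
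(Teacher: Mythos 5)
Your proof is correct, but it follows a genuinely different route from the paper's. The paper never compares the two proof systems syntactically and never invokes the cited Kripke completeness of $\ipsomega$: it proves (1)$\Leftrightarrow$(3) and (2)$\Leftrightarrow$(3) purely algebraically, noting soundness is straightforward and obtaining completeness from the Lindenbaum--Tarski algebras --- that of $\ipsomega$ is obviously an $\omega$-complete, $\omega$-multiplicative CKL-algebra, while that of $\ipsmh$ is an $\omega$-complete, $\omega$-multiplicative MH-algebra and hence a CKL-algebra by Theorem \ref{mhckl}. You instead close the cycle (1)$\Rightarrow$(2)$\Rightarrow$(3)$\Rightarrow$(1): your leg (1)$\Rightarrow$(2) is a proof-theoretic simulation, deriving the $\omega$-rule (\ref{cright}) inside $\ipsmh$ from the key lemma $\ipsmh\vdash\cko\phi\equiv\iand_{n\in\omega}\eko^{n}\phi$, and your leg (3)$\Rightarrow$(1) detours through Kripke frames via the complex algebras $F^{+}$ and the cited completeness theorem for $\ipsomega$. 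It is worth observing that your key lemma is exactly the syntactic counterpart of the paper's application of Theorem \ref{mhckl} to the Lindenbaum--Tarski algebra of $\ipsmh$: your derivation of $\psi\supset\cko\psi$ for $\psi=\iand_{n}\eko^{n}\phi$, using the induction axiom (\ref{ckoind}) together with (\ref{infbarcan}), mirrors the computation with $z=\bigsqcap_{n\in\omega}\aeko^{n}x$ in the proof of Theorem \ref{mhckl}, so the underlying mathematics coincides and only the packaging differs. What your route buys is finer-grained information: an explicit admissibility proof of the $\omega$-rule in $\ipsmh$, showing the two systems prove the same formulas for a syntactic reason, independent of any semantics. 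What the paper's route buys is brevity and self-containedness: it reuses a theorem already proved, avoids the (routine, but worth stating) extension of the correspondence $F\vl\phi\eq F^{+}\vl\phi$ to infinitary formulas that your leg (3)$\Rightarrow$(1) needs, and does not consume the Kripke completeness of $\ipsomega$ as an input --- a point that matters for the paper's closing discussion, where Kripke completeness of $\ipsmh$ is presented as a \emph{consequence} of this theorem rather than an ingredient of its proof.
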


\begin{proof}
It is straightforward to show the soundness of $\ipsomega$ and $\ipsmh$ 
with respect to the class of $\omega_{1}$-complete and 
$\omega_{1}$-multiplicative CKL-algebras. 
It is obvious that the Lindenbaum-Tarski algebra of 
$\ipsomega$ is an $\omega_{1}$-complete and 
$\omega_{1}$-multiplicative CKL-algebra.  
By Theorem \ref{mhckl}, it follows that 
the Lindenbaum-Tarski algebra of $\ipsmh$ 
is also an  $\omega_{1}$-complete and 
$\omega_{1}$-multiplicative CKL-algebra. 
Hence, 
$\ipsomega$ and $\ipsmh$ are complete 
with respect to the class of $\omega_{1}$-complete and 
$\omega_{1}$-multiplicative CKL-algebras. 
\end{proof}

By Theorem \ref{infalgcompleteness}, 
$\ipsmh$ and $\ipsomega$ are equivalent. Hence, 
we have the following: 

\begin{corollary}\label{infcompleteness}
For each formula $\phi\in\icformulas$,  
$\phi$ is provable in $\ipsmh$ if and only if 
it is valid in the class $\fckl$ of CKL-frames. 
\end{corollary}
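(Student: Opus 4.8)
The plan is to obtain this corollary by simply chaining the two completeness statements that are already in hand, since no fresh semantic argument is required. First I would invoke the immediately preceding theorem, which establishes for every $\phi\in\icformulas$ the equivalence of its conditions (1) and (2), namely that $\phi$ is provable in $\ipsomega$ if and only if it is provable in $\ipsmh$. This is the part that carries the real content: it rests on the observation that the Lindenbaum--Tarski algebra of $\ipsmh$ is $\omega$-complete and $\omega$-multiplicative, so that by Theorem \ref{mhckl} it is in fact a CKL-algebra, forcing $\ipsmh$ to be complete with respect to exactly the same algebraic class as $\ipsomega$.

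Next I would quote the Kripke completeness of $\ipsomega$ recorded from \cite{knk-ngs-szk-tnk}, namely that $\phi$ is provable in $\ipsomega$ if and only if $\phi$ is valid in the class $\fckl$ of CKL-frames. Composing the two equivalences then yields the chain
$$
\phi \text{ provable in } \ipsmh
\eq
\phi \text{ provable in } \ipsomega
\eq
\phi \text{ valid in } \fckl,
$$
which is precisely the assertion of the corollary. Thus the statement follows with no additional construction.

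I do not expect any serious obstacle here; the corollary is a bookkeeping consequence of results already proved, and the only point worth checking is that the infinitary machinery is genuinely covered. In particular, one should confirm that the axioms (\ref{infbarcan}) and (\ref{infaxiom}) together with the rules (\ref{infrule}) guarantee that the Lindenbaum--Tarski algebra of $\ipsmh$ really is $\omega$-complete and $\omega$-multiplicative, so that Theorem \ref{mhckl} applies to it; but this is exactly what was used in the preceding theorem, so the hypotheses transfer without further work.
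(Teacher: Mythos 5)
Your proposal is correct and is exactly the argument the paper intends: the corollary follows by chaining the equivalence of provability in $\ipsmh$ and $\ipsomega$ (established in the immediately preceding theorem via the observation that the Lindenbaum--Tarski algebra of $\ipsmh$ is an $\omega$-complete, $\omega$-multiplicative MH-algebra, hence a CKL-algebra by Theorem \ref{mhckl}) with the Kripke completeness of $\ipsomega$ from \cite{knk-ngs-szk-tnk}. No further argument is needed, and your closing check about the infinitary axioms and rules is precisely the point already discharged in the preceding theorem's proof.
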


Interestingly, the Kripke completeness of $\ipsmh$ can also be shown directly, by 
using a kind of general
technique, whereas proving the Kripke completeness of $\psmh$ is not straightforward
(see \cite{{mey-vdh95}}). 
It is well-known that the J\'{o}nsson-Tarski representation theorem provides 
Kripke completeness of many types of modal logics.  
Furthermore, it is known that an extension of  the J\'{o}nsson-Tarski 
representation 
theorem, which preserves countably many infinite joins and meets, holds,  
and the Kripke completeness of various non-compact modal logics follows from this 
extension in the same way (\cite{tnk-ono98,tnknoncpt,tnk01}). 
In fact, Kripke completeness of some proof systems for the common knowledge 
logic is proved by means of this extension (\cite{tnk-ono98,tnknoncpt,tnk01}). 
When proving the Kripke completeness of the common knowledge logic 
using this extension, 
it is necessary to show that 
$\acko x=\bigsqcap_{n\in\omega}\aeko^{n}x$ 
holds in the 
Lindenbaum-Tarski algebra. 
As we have seen in Theorem \ref{freemh}, it is true that the Lindenbaum-Tarski 
algebra of $\psmh$ satisfies 
$\acko x=\bigsqcap_{n\in\omega}\aeko^{n}x$,  
but, in Theorem \ref{freemh}, 
this was shown by using the Kripke completeness of $\psmh$. 
Here, we need to show that 
$\acko x=\bigsqcap_{n\in\omega}\aeko^{n}x$ holds in the 
Lindenbaum-Tarski algebra of $\psmh$ without relying on Kripke completeness. 
However, this does not seem to be straightforward.  
On the other hand, it follows immediately from Theorem \ref{mhckl} that 
the Lindenbaum-Tarski algebra of $\ipsmh$ satisfies 
$\acko x=\bigsqcap_{n\in\omega}\aeko^{n}x$.  
Therefore, it is straightforward to show the Kripke completeness of $\ipsmh$ 
directly from the extension of J\'{o}nsson-Tarski representation.

Finally, we compare the infinitary extensions of $\psomega$ and $\psmh$ with 
their predicate extensions. 
For simplicity, we consider predicate extensions without 
function symbols and equality. 
The predicate extensions of $\psomega$ and $\psmh$ are defined 
by adding the following axiom schemas and inference rules to them, respectively: 
\begin{enumerate}
\item
$\phi[y/x]\supset\exists\phi$, 
$\forall x\phi\supset\phi[y/x]$;

\item
$
\dfrac
{\phi[y/x]\supset\psi}
{\exists x\phi\supset\psi}
$, 
$
\dfrac
{\psi\supset\phi[y/x]}
{\psi\supset\forall x\phi}
$
(where $y$ is a variable 
that does not occur in the conclusion); 

\item
Barcan formula: 
$
\barcan.
$
\end{enumerate}
As observed, both the infinitary extensions of $\psomega$ and $\psmh$ are 
Kripke complete. 
However, their predicate extensions exhibit different properties. 
It is proved in \cite{knk-ngs-szk-tnk} 
that the predicate extension of $\psomega$ is sound and complete 
with respect to the class of CKL-frames with constant domains. 
However, Wolter \cite{wlt00} proved that the set of 
formulas of the predicate common knowledge logic which is valid 
in the class of CKL-frames with constant domains is not recursively 
enumerable.  
Therefore, 
the predicate extension of $\psmh$ is not complete with respect to the 
class of CKL-frames with constant domains. 
Theorem \ref{infcompleteness} serves as an example demonstrating the 
distinct properties exhibited by infinitary extensions and predicate extensions.

\bibliographystyle{plain}
\bibliography{myref.sjis}

\end{document}